\newtheorem{theorem}{Theorem}
\newtheorem{lemma}[theorem]{Lemma}
\newtheorem{definition}[theorem]{Definition}
\theoremstyle{remark}\newtheorem{remark}[theorem]{Remark}
\newcommand{\eq}[1]{\begin{align*}#1\end{align*}}  
\begin{document}

\title{A dyadic analysis approach to the problem of continuity of weighted estimates with respect to the $A_{p}$ characteristic.}
\author{Nikolaos Pattakos} 

\begin{abstract}
This paper presents a new proof of the results regarding the continuity of weighted estimates with respect to the characteristic of the weight. Here we first prove the result in the dyadic case which is ``easier" and then by the use of the Bellman function technique we pass to the continuous setting which is harder in general. To be more precise, as far as we know, this passage from the Martingale transform to the Hilbert transform described in this note is new.
\end{abstract}

\subjclass{30E20, 47B37, 47B40, 30D55.} 
\keywords{Key words: Calder\'on--Zygmund operators, $A_2$ weights, Hilbert transform.}

\maketitle
\pagestyle {myheadings}

\begin{section}{introduction}
\markboth{\normalsize  Nikolaos Pattakos}{\normalsize  A Dyadic Analysis approach}
Throughout this article, we will denote $w$ as a positive $L^{1}_{loc}(\mathbb R^d)$ function which we call weight. For $1<p<\infty$ we say that $w$ belongs to the Muckenhoupt $A_{p}$ class if the quantity

\begin{equation}
\label{we}
[w]_{A_{p}}:=\sup_{Q}\Big(\frac1{|Q|}\int_{Q}w(x)\ dx\Big)\Big(\frac1{|Q|}\int_{Q}w(x)^{-\frac1{p-1}}\ dx\Big)^{p-1}
\end{equation}
is finite, where we take the supremum over all cubes in $\mathbb R^d$. These classes have been studied extensively over the last thirty years and many of their properties can be found in \cite{GCRF}. If we restrict ourselves in $\mathbb R$ and we consider dyadic intervals we can define the $A_{p}^{d}$ classes of weights in an obvious way. That is, we choose the cubes $Q$ to be dyadic intervals and the definition makes sense. In \cite{P1}, \cite{N}, \cite{NV1}, \cite{NV2} the following Theorem was proved.

\begin{theorem}
\label{TH}
Suppose that for some $1<p<+\infty$, a sublinear operator $T$ satisfies the inequality

$$\|T\|_{L^{p}(w)\rightarrow L^{p}(w)}\leq F([w]_{A_{p}}),$$
for all $w\in A_{p}$, where $F$ is a positive increasing function. Then

$$\lim_{[w]_{A_{p}}\to 1^+}\|T\|_{L^{p}(w)\rightarrow L^{p}(w)}=\|T\|_{L^{p}(dx)\rightarrow L^{p}(dx)},$$
and 

$$\|T\|_{L^{p}(w)\rightarrow L^{p}(w)}\leq\|T\|_{L^{p}(dx)\rightarrow L^{p}(dx)}(1+c\sqrt{\delta})$$
for all $A_{p}$ weights, $w$, with characteristic $[w]_{A_{p}}=1+\delta\approx 1$, and $c$ is a constant that depends on the dimension and $p$.
\end{theorem}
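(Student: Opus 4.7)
The plan is to extract both assertions of the theorem from the abstract hypothesis $\|T\|_{L^{p}(w)\rightarrow L^{p}(w)}\leq F([w]_{A_{p}})$ by a single Stein--Weiss-type interpolation-with-change-of-measure between the pair of weights $(dx,\; w^{s}\,dx)$, where the exponent $s\gg 1$ is taken as large as a self-improvement of $A_{p}$ will permit. Writing $w=1^{1-\theta}(w^{s})^{\theta}$ with $\theta=1/s$, the diagonal Stein--Weiss theorem gives
$$\|T\|_{L^{p}(w)\rightarrow L^{p}(w)}\;\leq\;\|T\|_{L^{p}(dx)\rightarrow L^{p}(dx)}^{1-\theta}\;\|T\|_{L^{p}(w^{s})\rightarrow L^{p}(w^{s})}^{\theta},$$
and the bound $(1+c\sqrt{\delta})$ will drop out as soon as $\theta$ can be taken of order $\sqrt{\delta}$ while $[w^{s}]_{A_{p}}$ remains universally bounded. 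Note that both sides of this interpolation inequality involve the same Lebesgue exponent $p$, so nothing is lost by the diagonal restriction.

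The crucial analytic input is therefore the following sharp self-improvement statement: there exist constants $c_{1},M>0$ depending only on $p$ and $d$ such that whenever $[w]_{A_{p}}=1+\delta$ with $\delta$ small, one has $[w^{s}]_{A_{p}}\leq M$ for every $s$ with $1\leq s\leq 1+c_{1}/\sqrt{\delta}$; equivalently, $\|\log w\|_{BMO}\lesssim\sqrt{\delta}$, promoted via John--Nirenberg. This is precisely where the dyadic analysis and the Bellman-function machinery advertised in the abstract enter: they deliver the sharp $\sqrt{\delta}$ scaling (any inferior $\delta^{\alpha}$ with $\alpha<1/2$ would propagate unchanged through the remainder of the proof) and they provide the passage from the dyadic $A_{p}^{d}$ class to the continuous one. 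Choosing $s=1+c_{1}/\sqrt{\delta}$ then gives $\|T\|_{L^{p}(w^{s})\rightarrow L^{p}(w^{s})}\leq F(M)=:C_{0}$ by the hypothesis, and the Taylor estimate
$$(C_{0}/\|T\|_{L^{p}(dx)})^{\theta}\;\leq\;1+\theta\log(C_{0}/\|T\|_{L^{p}(dx)})$$
with $\theta=O(\sqrt{\delta})$ yields the quantitative inequality claimed in the theorem. The limit statement is then automatic: the above is the $\limsup\leq\|T\|_{L^{p}(dx)}$ direction, while the matching $\liminf\geq\|T\|_{L^{p}(dx)}$ follows by testing $T$ on a fixed $f\in C_{c}(\mathbb{R}^{d})$ and invoking the local near-constancy of $w$ supplied by the same BMO control to pass weighted ratios to their unweighted counterparts.

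The principal obstacle is Step~1, the sharp self-improvement with the correct $\sqrt{\delta}$ rate; everything else is essentially algebraic manipulation once that estimate is in hand. A secondary technical point is that $T$ is only assumed sublinear, so the Stein--Weiss step must be invoked in its Calder\'on positive-sublinear variant rather than via a linear analytic family; because the endpoint exponents coincide and the change of measure acts only through multiplication operators, this causes no additional difficulty, and the log-convexity of $\|T\|_{L^{p}(v)\rightarrow L^{p}(v)}$ in $\log v$ can be extracted directly.
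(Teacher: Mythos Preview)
Your overall strategy---Stein--Weiss interpolation with change of measure between $(dx,w^{s}dx)$, combined with the sharp self-improvement $\|\log w\|_{BMO}\lesssim\sqrt{\delta}$ via John--Nirenberg---is exactly the approach the paper \emph{attributes} to the original proof of this theorem. Note, however, that the paper does not actually prove Theorem~\ref{TH}: it is quoted from the references \cite{P1,N,NV1,NV2}, and the paper merely summarizes the method in one sentence (``The proof used the Riesz--Thorin interpolation Theorem that appears in \cite{SW} and the nice interplay between the $A_{p}$ classes and the $BMO(\mathbb R^{d})$ space''). So you have correctly reconstructed the cited proof, not the paper's own contribution.

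There is one genuine confusion in your write-up. You say that the dyadic analysis and the Bellman-function machinery ``advertised in the abstract'' are what supply the sharp $\sqrt{\delta}$ self-improvement in Step~1. They are not. In the original argument you are reproducing, the $\sqrt{\delta}$ rate comes from the classical (non-dyadic) $A_{p}$--$BMO$ theory and John--Nirenberg; no Bellman function enters. The paper's dyadic and Bellman-function work is aimed at something entirely different: it bypasses Theorem~\ref{TH} altogether and proves the $(1+c\sqrt{\delta})$ bound directly, operator by operator, first for the martingale transform (Theorem~\ref{mainmain}, via a weighted Haar decomposition and a Carleson-embedding lemma) and then for the Hilbert transform and $R_{j}^{2}$ (Theorems~\ref{HIL} and~\ref{RIE}, by reading a Bellman function off the dyadic estimate and running it against harmonic or heat extensions). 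The whole point of the paper is to avoid the interpolation-plus-BMO route that you have sketched. So your proposal is a valid proof of the stated theorem, but it is precisely the ``stronger machinery'' the paper is trying to replace, and it is not what the dyadic/Bellman content of the paper is doing.
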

The proof used the Riesz-Thorin interpolation Theorem that appears in \cite{SW} and the nice interplay between the $A_{p}$ classes and the $BMO(\mathbb R^d)$ space. In this note, we try to derive this result using weaker machinery than the one mentioned before. In particular, we first prove the result for the Martingale transform using dyadic analysis and then by the use of the Bellman function technique we obtain the same result for the Hilbert transform on $\mathbb R$ for Poisson $A_{2}$ weights and the squares of the Riesz transforms on $\mathbb R^d$ for Heat $A_2$ weights. The purpose of this different proof is to present a different point of view in dealing with this kind of estimates and maybe give a better understanding of them.

Estimates like the one presented in Theorem \ref{TH} can be used in many different areas of mathematics. For example, in \cite{CS} such a continuity result was used for the study of PDE with random coefficients and in \cite{NIEL} the sharp asymptotic behavior of the $L^{2}(w)$ norm of the Riesz projection $P_{+}$, with respect to the $[w]_{A_{2}}$ characteristic, comes into play in the study of Schauder bases. Let us mention that Theorem \ref{TH} can be applied for a huge class of operators as the remarks dictate below.

\begin{remark}
In \cite{B2} Buckley showed that the Hardy-Littlewood maximal operator defined as

$$Mf(x):=\sup_{Q}\frac1{|Q|}\int_{Q}|f(y)|\ dy,$$
where the supremum is taken over all cubes in $\mathbb R^d$ that contain $x$, satisfies the estimate
$$\|M\|_{L^{p}(w)\rightarrow L^{p}(w)}\leq c[w]_{A_{p}}^{\frac1{p-1}},$$
for $1<p<+\infty$, and all weights $w\in A_{p}$, where the constant $c>0$ is independent of the weight $w$. This means that the assumptions of Theorem \ref{TH} hold for $M$.  
\end{remark}
\begin{remark}
Consider any Calder\'on-Zygmund operator $T$. By \cite{H1} we know that:
$$\|T\|_{L^{p}(w)\rightarrow L^{p}(w)}\leq c\,[w]_{A_{p}}^{max(1,\frac{1}{p-1})},$$ for any $A_{p}$ weight $w$, where $c>0$ is independent of the weight. This means that we can apply Theorem \ref{TH}, for $1<p<\infty$ and $F(x)=cx^{max(1,\frac{1}{p-1})}$.
\end{remark}  
The previous result was known as the $A_{2}$ conjecture because it suffices to prove it for $A_{2}$ weights and then you use a sharp extrapolation argument for the other values of $1<p<\infty$. Many mathematicians contributed in the proof of this very important Theorem and the first proof was done in \cite{H1}. The idea was to prove it for dyadic operators called the Haar shift operators and then by an ``averaging" argument to obtain the family of Calder\'on-Zygmund operators. For our estimates this averaging process can not work since many constants appear during the procedure and this is something that does not seem to be easily fixable. Very famous examples of such operators are the Hilbert transform

\begin{equation}
\label{H1}
Hf(x):=\frac{1}{\pi}\ p.v.\int_{\mathbb R}\frac{f(y)}{x-y}\ dy,
\end{equation}
and the Riesz transforms

\begin{equation}
\label{R1}
R_{j}f(x):=\frac{\Gamma(\frac{d+1}{2})}{\pi^{\frac{d+1}{2}}}\ p.v.\int_{\mathbb R^d}\frac{x_{j}-y_{j}}{|x-y|^{d+1}}\cdot f(y)\ dy,
\end{equation}
$1\leq j\leq d$. We will deal with these operators later in the paper and we will obtain the desired continuity for $H$ and $R_{j}^{2}$ using the Martingale transform that is defined as:

\begin{equation}
\label{Mart}
T_{\sigma}f=\sum_{I\in\mathcal D}\sigma_{I}(f,h_{I})h_{I},
\end{equation}
where $\mathcal D$ is a fixed dyadic lattice and $\sigma=\{\sigma_{I}\}_{I\in\mathcal D}\subseteq\{z\in\mathbb C:|z|\leq1\}$. Here we denote by $h_{I}$ the Haar function associated to the interval $I$ defined in the following way:

$$h_{I}(x)=\frac1{\sqrt{|I|}}\Big(\chi_{I_{+}}(x)-\chi_{I_{-}}(x)\Big).$$
By $I_{+}, I_{-}$ we mean the right half and the left half of the interval $I$, respectively. Note that the sequence $\{h_{I}\}_{I\in\mathcal D}$ is an orthonormal basis in $L^{2}$. Also, by $(f,h_{I})$ we mean the inner product of the two functions in $L^{2}$ and with $\mathcal D(J)$, where $J\in\mathcal D$ we denote all dyadic subintervals of $J$ including $J$ itself . Our goal is to present a new proof of the Theorem.

\begin{theorem}
\label{mainmain}
Suppose that $w$ is an $A_{2}^{d}$ weight with $[w]_{A_{2}^{d}}=1+\delta$ where $\delta\approx 0$. Then there is a universal constant $c>0$ such that

$$\|T_{\sigma}\|_{L^{2}(w)\rightarrow L^{2}(w)}\leq1+c\sqrt{\delta}.$$
\end{theorem}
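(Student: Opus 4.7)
The plan is to combine the linear $A_2^d$ bound for the martingale transform with complex interpolation with change of measure (Stein-Weiss), exploiting the structural fact that $[w]_{A_2^d}$ being close to $1$ forces $\log w$ to have very small dyadic BMO norm, of order $\sqrt{\delta}$. This allows one to embed $w$ in an analytic family $\{w^z\}_{z\in\mathbb C}$ whose $A_2^d$ characteristic remains uniformly bounded for $|z|\lesssim 1/\sqrt{\delta}$, and then to interpolate against the trivial endpoint $w^0 = 1$.

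First I would record the BMO input. Setting $\phi := \log w$, the inequality $\langle e^\phi\rangle_I\langle e^{-\phi}\rangle_I = \langle w\rangle_I\langle w^{-1}\rangle_I \leq 1+\delta$, valid for every $I\in\mathcal D$, gives by a second-order expansion of $e^{\pm(\phi-\langle\phi\rangle_I)}$ (using $\langle\phi - \langle\phi\rangle_I\rangle_I = 0$) the variance estimate $\langle (\phi-\langle\phi\rangle_I)^2\rangle_I \leq \delta + O(\delta^2)$, hence $\|\phi\|_{BMO^d}\leq c_1\sqrt{\delta}$. A quantitative dyadic John-Nirenberg argument (an iterative Calder\'on-Zygmund stopping-time decomposition on $\mathcal D$) then produces universal constants $K$ and $c_2$ such that the stretched weight $w^{t_*}$ with $t_* := c_2/\sqrt{\delta}$ still satisfies $[w^{t_*}]_{A_2^d}\leq K$.

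Next, Wittwer's linear $A_2^d$ theorem $\|T_\sigma\|_{L^2(u)\to L^2(u)}\leq C[u]_{A_2^d}$, applied to $u = w^{t_*}$, yields $\|T_\sigma\|_{L^2(w^{t_*})\to L^2(w^{t_*})} \leq CK$. Together with the trivial unweighted bound $\|T_\sigma\|_{L^2(dx)\to L^2(dx)}\leq 1$ (from $|\sigma_I|\leq 1$ and orthonormality of the Haar basis), the Stein-Weiss interpolation theorem for linear operators on weighted $L^2$-spaces, applied at the parameter $\theta := 1/t_*$, produces the intermediate weight $w^{t_*\theta} = w$ and the bound
$$\|T_\sigma\|_{L^2(w)\to L^2(w)} \leq 1^{1-\theta}(CK)^\theta = (CK)^{\sqrt{\delta}/c_2} = 1 + c\sqrt{\delta} + O(\delta),$$
which, after absorbing the $O(\delta)$ into a slightly larger universal constant $c$, is exactly the claim.

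The main obstacle I expect is the quantitative John-Nirenberg step: one needs precise control on the distribution function of $\phi$ in terms of $\|\phi\|_{BMO^d}\sim\sqrt{\delta}$ so that the interpolation window $|t|\leq 1/\sqrt{\delta}$ is genuinely available. The dyadic setting is favorable here because the filtration $\mathcal D$ is tailored to stopping-time decompositions, which is presumably what the paper refers to as ``dyadic analysis.'' Wittwer's linear $A_2^d$ theorem is the other deep ingredient and is used here as a black box; its own proof requires a Bellman function. The elegance of the reduction is that once these two pieces are in place, the sharp $\sqrt{\delta}$ rate in the conclusion emerges mechanically from the interpolation optimization $\theta = 1/t_*$.
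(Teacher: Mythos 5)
Your argument is essentially correct, but it is not the paper's argument: it is, in all its main steps, the \emph{earlier} interpolation proof of Theorem \ref{TH} from \cite{SW}, \cite{P1}, \cite{NV1}, \cite{NV2}, specialized to $T_{\sigma}$ by instantiating the hypothesis of that theorem with Wittwer's linear bound --- precisely the machinery (Stein--Weiss interpolation plus the $A_{p}$--$BMO$ interplay) that this paper announces it wants to avoid. Your chain --- $[w]_{A_{2}^{d}}=1+\delta$ forces $\|\log w\|_{BMO^{d}}\le c_{1}\sqrt{\delta}$; quantitative John--Nirenberg keeps $[w^{t_{*}}]_{A_{2}^{d}}\le K$ for $t_{*}=c_{2}/\sqrt{\delta}$; Wittwer's theorem $\|T_{\sigma}\|_{L^{2}(u)\to L^{2}(u)}\le C[u]_{A_{2}^{d}}$ at that endpoint; Stein--Weiss with change of measure at $\theta=1/t_{*}$, giving $(CK)^{\sqrt{\delta}/c_{2}}=1+O(\sqrt{\delta})$ --- is sound, and every ingredient is available in the literature the paper cites. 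The paper instead works entirely inside the dyadic model: by duality it reduces to the bilinear form $\sum_{I}\sigma_{I}(fw^{-1/2},h_{I})(gw^{1/2},h_{I})$, expands in the disbalanced weighted Haar systems $h_{I}^{w}$, $h_{I}^{w^{-1}}$, and splits into four sums $\Sigma_{1},\dots,\Sigma_{4}$; the main term $\Sigma_{1}$ is bounded by $\sqrt{[w]_{A_{2}^{d}}}\,\|f\|_{2}\|g\|_{2}\le(1+\delta/2)\|f\|_{2}\|g\|_{2}$ using orthogonality, while the error terms are controlled by the weighted Carleson embedding theorem (Theorem \ref{Carcar}) together with Lemma \ref{USE}, whose engine is the sharp Carleson estimate $\frac{1}{|J|}\sum_{I\in\mathcal D(J)}c_{I}^{2}\le 2\log[w]_{A_{2}^{d}}\approx 2\delta$ from \cite{JW} and a reverse H\"older argument; there the $\sqrt{\delta}$ emerges as $\sqrt{\log[w]_{A_{2}^{d}}}$. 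The trade-off: your route is shorter and more general (it works for any operator with a power-type $A_{2}$ bound), but it consumes Wittwer's full linear $A_{2}$ theorem --- itself a deep Bellman-function result --- and interpolation as black boxes, whereas the paper only needs the far more elementary Carleson-sequence estimate from her paper; moreover, the paper's direct dyadic inequality is exactly what feeds the Bellman-function construction in Section \ref{sec3}, from which the Hilbert transform result (Theorem \ref{HIL}) is extracted, and an interpolation proof of Theorem \ref{mainmain} produces no such function.

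Two small points of rigor in your write-up. First, the ``second-order expansion'' yielding $\langle(\phi-\langle\phi\rangle_{I})^{2}\rangle_{I}\le\delta+O(\delta^{2})$ cannot be a pointwise Taylor expansion, since $\psi:=\phi-\langle\phi\rangle_{I}$ is small only in mean square; the clean argument uses Jensen to get $\langle e^{\psi}\rangle_{I},\langle e^{-\psi}\rangle_{I}\ge 1$, then $ab\ge a+b-1$ for $a,b\ge 1$ and $2\cosh x\ge 2+x^{2}$, which gives $\langle\psi^{2}\rangle_{I}\le\delta$ exactly. Second, Stein--Weiss requires $\theta=\sqrt{\delta}/c_{2}\in(0,1)$ and a common dense class on which $T_{\sigma}$ is defined in both weighted spaces; both hold for $\delta$ small, but should be stated.
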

In order to be able to derive the desired result we have to present some preliminary machinery that is needed. So we have the well known definition of a Carleson sequence.

\begin{definition}
\label{def}
A sequence of positive numbers $\{\alpha_{I}\}_{I\in\mathcal D}$ is called a Carleson sequence if there exists a constant $C>0$ such that for every dyadic interval $J$ we have 

\begin{equation}
\label{Car}
\frac1{|J|}\sum_{I\in\mathcal D(J)}\alpha_{I}\leq C.
\end{equation}
\end{definition}

This kind of sequences plays a very important role in dyadic Harmonic Analysis. It becomes clear if we take into consideration the Weighted Carleson Embedding Theorem that states:

\begin{theorem}
\label{Carcar}
Let $\{\alpha_{I}\}_{I\in\mathcal D}$ be a sequence of positive numbers and $w$ be a weight. Then,

$$\sum_{I\in\mathcal D}(fw^{\frac12})_{I}^{2}\alpha_{I}\leq 4C\|f\|_{2}^{2}$$
for all $f\in L^{2}(dx)$ if and only if 

$$\frac1{|J|}\sum_{I\in\mathcal D(J)}w_{I}^{2}\alpha_{I}\leq Cw_{J}$$
for all $J\in\mathcal D$.
\end{theorem}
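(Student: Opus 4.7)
The plan is to reduce the weighted Carleson Embedding Theorem to the unweighted (classical) Carleson Embedding Theorem by a change of measure, and then to run the standard stopping-time proof in the new measure space.

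The key algebraic observation is the following factorization. Set $d\mu = w\,dx$, $\tilde\alpha_{I} := w_{I}^{2}\alpha_{I}$, and $\psi := fw^{-1/2}$, so that $\psi \mapsto f = \psi w^{1/2}$ is an isometric bijection between $L^{2}(d\mu)$ and $L^{2}(dx)$, namely $\|f\|_{2}^{2}=\|\psi\|_{L^{2}(\mu)}^{2}$. A direct calculation yields
$$(fw^{1/2})_{I} = \frac{1}{|I|}\int_{I}\psi\,w\,dx = w_{I}\,\langle\psi\rangle_{\mu,I},\qquad \langle\psi\rangle_{\mu,I}:=\frac{1}{\mu(I)}\int_{I}\psi\,d\mu.$$
In these variables the left-hand side of the embedding becomes $\sum_{I}\langle\psi\rangle_{\mu,I}^{2}\,\tilde\alpha_{I}$, while the Carleson hypothesis $\frac{1}{|J|}\sum_{I\in\mathcal D(J)}w_I^2\alpha_I\leq Cw_J$ reads $\sum_{I\in\mathcal D(J)}\tilde\alpha_{I}\leq C\,\mu(J)$. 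Thus the problem reduces to the classical Carleson embedding for the dyadic filtered measure space $(\mathbb R,\mathcal D,\mu)$, with the sequence $\tilde\alpha_{I}$ Carleson relative to $\mu$.

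For the sufficiency direction I would apply the standard layer-cake / stopping-time argument. Writing $\langle\psi\rangle_{\mu,I}^{2} = 2\int_{0}^{\infty}\lambda\,\mathbf{1}_{\langle\psi\rangle_{\mu,I}>\lambda}\,d\lambda$ and swapping sum and integral reduces matters to bounding $\sum_{I:\langle\psi\rangle_{\mu,I}>\lambda}\tilde\alpha_{I}$ for each $\lambda>0$. Decomposing this collection along the maximal stopping intervals $\mathcal F_{\lambda}$ (where $\langle\psi\rangle_{\mu,I}$ first exceeds $\lambda$) and applying the Carleson bound on each subtree $\mathcal D(J)$ gives
$$\sum_{I:\langle\psi\rangle_{\mu,I}>\lambda}\tilde\alpha_{I}\;\leq\;C\sum_{J\in\mathcal F_{\lambda}}\mu(J)\;=\;C\,\mu\bigl(\{M_{\mu}^{d}\psi>\lambda\}\bigr),$$
where $M_{\mu}^{d}$ is the dyadic $\mu$-maximal operator. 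Integrating back in $\lambda$ and invoking Doob's inequality $\|M_{\mu}^{d}\psi\|_{L^{2}(\mu)}\leq 2\|\psi\|_{L^{2}(\mu)}$ (which holds in any $\sigma$-finite measure space for the dyadic filtration) yields $\sum_{I}\langle\psi\rangle_{\mu,I}^{2}\tilde\alpha_{I}\leq 4C\|\psi\|_{L^{2}(\mu)}^{2}$, which is the desired estimate after unwinding the substitution. For the necessity direction, testing the inequality on $\psi = \chi_{J}$ (equivalently $f = w^{1/2}\chi_{J}$) gives $\langle\chi_{J}\rangle_{\mu,I}=1$ for every $I\in\mathcal D(J)$, so the embedding immediately produces $\sum_{I\in\mathcal D(J)}\tilde\alpha_{I}\leq 4C\mu(J)$, recovering the Carleson condition up to a harmless constant.

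The only genuinely conceptual step -- and the only one that is not a transcription of the unweighted proof -- is spotting the factorization $(fw^{1/2})_{I}=w_{I}\langle fw^{-1/2}\rangle_{\mu,I}$ that turns the weighted hypothesis into a classical Carleson condition against $\mu$. I do not anticipate any real obstacle beyond this: no regularity on $w$ is needed, because Doob's maximal inequality and the stopping-interval construction rest only on the dyadic filtration and on $\mu$ being locally finite, both of which are automatic from $w\in L^{1}_{\text{loc}}$.
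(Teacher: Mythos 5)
Your proof is correct, but it follows a genuinely different route from the one the paper points to: the paper does not prove Theorem \ref{Carcar} at all, citing \cite{JW} for the statement and \cite{NTVV} (see also \cite{JJ}) for a proof, and those proofs are Bellman-function arguments, in which one exhibits an explicit concave function of the variables $\langle f\rangle_J$, $\langle f^{2}\rangle_J$, $\frac1{|J|}\sum_{I\in\mathcal D(J)}\alpha_I$ (and their weighted analogues) whose concavity, run as an induction on scales over the dyadic tree, yields the embedding with the sharp constant $4$. Your argument instead makes a change of density: the factorization $(fw^{1/2})_I = w_I\langle fw^{-1/2}\rangle_{\mu,I}$ with $d\mu=w\,dx$ converts the weighted statement into the classical Carleson embedding for the filtered measure space $(\mathbb R,\mathcal D,\mu)$, which you then prove by the stopping-time/layer-cake argument plus Doob's dyadic maximal inequality. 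This buys transparency and generality: the only weighted ingredient is the one-line factorization, no regularity of $w$ is used, and the constant $4$ is explained structurally as the square of Doob's $L^{2}$ constant $2$; the Bellman route, by contrast, is self-contained (no maximal-function theory), localizes automatically to finite trees, shows that $4$ is sharp, and is methodologically the same tool the paper uses everywhere else (indeed the same induction-on-scales mechanism is what transfers Theorem \ref{mainmain} to the Hilbert transform in Section \ref{sec3}). Three small points you should patch: (i) reduce to $\psi\ge 0$ before the layer-cake step, since $\langle\psi\rangle_{\mu,I}^{2}\le\langle|\psi|\rangle_{\mu,I}^{2}$; (ii) maximal stopping intervals need not exist when $\mu(\mathbb R)<\infty$ (take $\psi\equiv 1$ and $\lambda<1$: every dyadic interval qualifies), so run the argument inside a fixed $\mathcal D(J_0)$ and pass to the limit over the two half-line trees exhausting $\mathcal D$; (iii) as you noted, your necessity direction returns the Carleson condition with constant $4C$ rather than $C$ --- this loss is unavoidable (the constant $4$ in the embedding is sharp), so the theorem's ``if and only if'' can only be read up to that fixed factor.
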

This statement can be found in \cite{JW} and a proof in \cite{NTVV}. For a more general (but equivalent for constant weights) statement of Theorem \ref{Carcar} see \cite{JJ}. Finally, we need to define the Poisson class of $A_2$ weights and the Heat class of $A_2$ weights. For a weight $w$ on $\mathbb R^d$ we define its harmonic extension $w^{H}$ on $\mathbb R^{d+1}_{+}$ by the formula

$$w^{H}(x,t):=w\ast P_{t}(x)=\frac{\Gamma(\frac{d+1}{2})}{\pi^{\frac{d+1}{2}}}\int_{\mathbb R^{d}}w(y)\cdot\frac{t}{(t^{2}+|x-y|^{2})^{\frac{d+1}{2}}}\ dy,$$
where $P_{t}(x)$ is the Poisson Kernel, and its Poisson $A_2$ characteristic as

$$[w]_{A_{2}^{H}}:=\sup_{(x,t)\in\mathbb R^{d+1}_{+}}\Big[w^{H}(x,t)(w^{-1})^{H}(x,t)\Big].$$
The heat extension $w^h$ of $w$ on $\mathbb R^{d+1}_{+}$ is the function

$$w^{h}(x,t)=w\ast k_{t}(x)=\frac1{(4\pi t)^{\frac{d}{2}}}\int_{\mathbb R^d}w(y)\exp\Big(-\frac{|x-y|^{2}}{4t}\Big) \ dy,$$
where $k_{t}(x)$ is the heat kernel that is the fundamental solution of the equation $u_{t}-\Delta u=0$ and the Heat $A_2$ characteristic is given by

$$[w]_{A_{2}^{h}}:=\sup_{(x,t)\in\mathbb R^{d+1}_{+}}w^{h}(x,t)(w^{-1})^{h}(x,t).$$
For these two classes of weights many interesting facts are true. For instance, the Muckenhoupt $A_2$ weights and the heat $A_{2}$ weights are the same as sets and the two characteristics are comparable as it was proved in \cite{PV1}. For the Poisson $A_2$ class things are different in general. If $d>1$, there are power weights which are in the classical Muckenhoupt $A_2$ class for which the Poisson integral diverges and therefore, these power weights are not in the Poisson $A_{2}$ class as it was observed in \cite{P42}. In one dimension the numbers $[w]_{A_{2}^{H}}$ and $[w]_{A_{2}}$ are comparable and as a result the two sets of weights are equal.

In the next section we deal with our dyadic operators and in section \ref{sec3} we give the proof for the Hilbert transform by the use of the Bellman function technique. As it happens in most of the cases an estimate of the Martingale transform helps us to derive the existence of a very special function $B$ which in its turn yields to estimates of continuous operators.

\end{section}

\begin{section}{proof of theorem \ref{mainmain}}
Let us consider $f\in L^{2}(w)$ and write the duality

$$\|T_{\sigma}f\|_{L^{2}(w)}=\sup_{\|g\|_{L^{2}(w^{-1})}=1}\Big|\int T_{\sigma}f\cdot g\ dx\Big|=\sup_{\|g\|_{L^{2}(w^{-1})}=1}\Big|\int \sum_{I\in\mathcal D}\sigma_{I}(f,h_{I})h_{I}(x)g(x)\ dx\Big|.$$
Expanding the function $g=\sum_{J\in\mathcal D}(g,h_{J})h_{J}$ we have that

$$\|T_{\sigma}f\|_{L^{2}(w)}=\sup_{\|g\|_{L^{2}(w^{-1})}=1}\Big|\int\sum_{I,J\in\mathcal D}\sigma_{I}(f,h_{I})(g,h_{J})h_{I}(x)h_{J}(x)\ dx\Big|$$ 
and since the Haar functions are orthonormal we have that the right hand side is equal to

$$\sup_{\|g\|_{L^{2}(w^{-1})}=1}\Big|\sum_{I\in\mathcal D}\sigma_{I}(f,h_{I})(g,h_{I})\Big|.$$
Next we assume that $f,g\in L^{2}(dx)$ by taking the supremum of 

$$\sup_{\|g\|_{L^{2}(dx)}=1}\Big|\sum_{I\in\mathcal D}\sigma_{I}(fw^{-\frac12},h_{I})(gw^{\frac12},h_{I})\Big|,$$
and we substitute the Haar system by the orthogonal system of functions 

$$h_{I}^{w}(x)=\frac{h_{I}(x)+\gamma_{w}^{I}\cdot\chi_{I}(x)}{\delta_{w}^{I}},$$
where $\gamma_{w}^{I}=-\frac{c_{I}}{|I|}$, $\gamma_{w^{-1}}^{I}=-\frac{d_{I}}{|I|}$ and 

$$c_{I}=\sqrt{|I|}\cdot\frac{w_{I_{-}}-w_{I_{+}}}{2w_{I}},\ d_{I}=\sqrt{|I|}\cdot\frac{(w^{-1})_{I_{-}}-(w^{-1})_{I_{+}}}{2(w^{-1})_{I}}$$
and here by writing $w_{I}$ or $(w^{-1})_{I}$ we mean the average value of the function over the given interval. Finally,
for the $\delta_{w}^{I}$ numbers we have the relation

$$(\delta_{w}^{I})^{2}=w_{I}\cdot\Big(1-\frac{c_{I}^{2}}{|I|}\Big)=\frac{w_{I_{+}}w_{I_{-}}}{w_{I}}.$$
After all these changes of variables we arrive at the sum of four summands $\Big|\Sigma_{1}+\Sigma_{2}+\Sigma_{3}+\Sigma_{4}\Big|$ that we have to estimate from above, where 

\begin{equation}
\label{S1}
\Sigma_{1}=\sum_{I\in\mathcal D}\sigma_{I}(fw^{-\frac12},h_{I}^{w^{-1}})\delta_{w^{-1}}^{I}(gw^{\frac12},h_{I}^{w})\delta_{w}^{I}
\end{equation}

\begin{equation}
\label{S2}
\Sigma_{2}=-\sum_{I\in\mathcal D}\sigma_{I}(fw^{-\frac12},\chi_{I})\gamma_{w^{-1}}^{I}(gw^{\frac12},h_{I}^{w})\delta_{w}^{I}
\end{equation}

\begin{equation}
\label{S3}
\Sigma_{3}=-\sum_{I\in\mathcal D}\sigma_{I}(fw^{-\frac12},h_{I}^{w^{-1}})\delta_{w^{-1}}^{I}(gw^{\frac12},\chi_{I})\gamma_{w}^{I}
\end{equation}
and finally,

\begin{equation}
\label{S4}
\Sigma_{4}=\sum_{I\in\mathcal D}\sigma_{I}(fw^{-\frac12},\chi_{I})\gamma_{w^{-1}}^{I}(gw^{\frac12},\chi_{I})\gamma_{w}^{I}.
\end{equation}
In all of them we pass the absolute value inside and we start with $\Sigma_{1}$. Since the product $w_{I_{-}}w_{I_{+}}\leq w_{I}^{2}$ (same holds for $w^{-1}$ in place of $w$) we get an estimate from above by the expression

$$\sqrt{[w]_{A_{2}^{d}}}\cdot\sum_{I\in\mathcal D}|(fw^{-\frac12},h_{I}^{w^{-1}})(gw^{\frac12},h_{I}^{w})|,$$
and by the use of Cauchy-Schwartz this is less than or equal to

$$\sqrt{[w]_{A_{2}^{d}}}\cdot\Big(\sum_{I\in\mathcal D}|(fw^{-\frac12},h_{I}^{w^{-1}})|^{2}\Big)^{\frac12}\Big(\sum_{I\in\mathcal D}|(gw^{\frac12},h_{I}^{w})|^{2}\Big)^{\frac12},$$
which is equal to 

$$\sqrt{[w]_{A_{2}^{d}}}\cdot\Big(\sum_{I\in\mathcal D}|(fw^{\frac12},h_{I}^{w^{-1}})_{L^{2}(w^{-1})}|^{2}\Big)^{\frac12}\Big(\sum_{I\in\mathcal D}|(gw^{-\frac12},h_{I}^{w})_{L^{2}(w)}|^{2}\Big)^{\frac12}.$$
Since the functions $\{h_{I}^{w}\}_{I\in\mathcal D}$ are orthogonal in $L^{2}(w)$ (the same is true for the functions $\{h_{I}^{w^{-1}}\}_{I\in\mathcal D}$ in $L^{2}(w^{-1})$) we see that $\Sigma_{1}$ is bounded above by the quantity

$$\sqrt{[w]_{A_{2}^{d}}}\cdot\|f\|_{2}\|g\|_{2}.$$
Now in order to estimate the remaining three sums we need the Lemma.

\begin{lemma}
\label{USE}
For all weights $w$ whose characteristic is sufficiently close to $1$ there is an $\epsilon>0$ and a constant $C>0$ such that for all $J\in\mathcal D$ we have

$$\frac1{|J|}\sum_{I\in\mathcal D(J)}(w^{-1})_{I}^{2}\ d_{I}^{2}\ w_{I}\leq C\ (\log[w]_{A_{2}^{d}})^{\epsilon}\ [w]_{A_{2}^{d}}\ (w^{-1})_{J}$$
and

$$\frac1{|J|}\sum_{I\in\mathcal D(J)}(w_{I})^{2}\ c_{I}^{2}\ (w^{-1})_{I}\leq C\ (\log[w]_{A_{2}^{d}})^{\epsilon}\ [w]_{A_{2}^{d}}\ w_{J}.$$
\end{lemma}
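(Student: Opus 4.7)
The plan is to prove the first inequality; the second follows by interchanging $w$ and $w^{-1}$ (which preserves $[w]_{A_{2}^{d}}$). The first step is to unfold the definition of $d_I$: since $d_I = \sqrt{|I|}\,((w^{-1})_{I_-}-(w^{-1})_{I_+})/(2(w^{-1})_I)$, one has
$$(w^{-1})_I^2\, d_I^2 \;=\; \frac{|I|}{4}\bigl((w^{-1})_{I_-}-(w^{-1})_{I_+}\bigr)^2,$$
so the left-hand side of the lemma becomes
$$\frac{1}{4|J|}\sum_{I\in\mathcal D(J)} |I|\,\bigl((w^{-1})_{I_-}-(w^{-1})_{I_+}\bigr)^2 w_I,$$
a Carleson-type sum of martingale increments of $w^{-1}$ weighted by averages of $w$.

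I would attack this by the Bellman function technique on the $A_{2}^{d}$ domain
$$\Omega_Q := \{(x,y)\in\mathbb R_+^2 : 1 \leq xy \leq Q\}, \qquad Q:=[w]_{A_{2}^{d}}.$$
The pair $(x_I,y_I):=(w_I,(w^{-1})_I)$ lies in $\Omega_Q$ (by the $A_{2}^{d}$ condition and Cauchy--Schwarz) and satisfies the averaging relation $(x_I,y_I)=\tfrac{1}{2}((x_{I_+},y_{I_+})+(x_{I_-},y_{I_-}))$. The aim is to produce $B:\Omega_Q\to[0,\infty)$ with the size bound $B(x,y)\leq C(\log Q)^{\epsilon}Q\, y$ and the finite-difference concavity
$$B(x,y) - \tfrac{1}{2}\bigl(B(x_+,y_+)+B(x_-,y_-)\bigr) \;\geq\; c\, x\, (y_+-y_-)^2$$
for every admissible split within $\Omega_Q$. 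Multiplying this inequality by $|I|$ and telescoping along the dyadic tree rooted at $J$ gives
$$\sum_{I\in\mathcal D(J)} w_I\bigl((w^{-1})_{I_+}-(w^{-1})_{I_-}\bigr)^2 |I| \;\leq\; c^{-1} B(w_J,(w^{-1})_J)|J| \;\leq\; C'(\log Q)^{\epsilon}Q\,(w^{-1})_J |J|,$$
which together with the first step is precisely the lemma.

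A natural ansatz is $B(x,y)=y\,\varphi(xy)$ for a suitable bounded $\varphi:[1,Q]\to[0,\infty)$. The infinitesimal version of the concavity requirement $-B_{yy}\geq cx$ reduces to the one-variable differential inequality $-2\varphi'(t)-t\varphi''(t)\geq c$ in $t=xy$, with explicit bounded solutions such as $\varphi(t)=a(2-t/Q-1/t)$; a direct computation gives $\|\varphi\|_\infty\asymp a\log Q$ while the concavity constant is $c\asymp a/Q$, so the ratio $c^{-1}\|\varphi\|_\infty\lesssim (\log Q)\,Q$ delivers $\epsilon=1$. The main technical obstacle is verifying the \emph{genuine} finite-difference concavity on the constrained domain $\Omega_Q$, since $y\,\varphi(xy)$ is not globally concave on the open quadrant; the constraint $xy\leq Q$ must be essentially used, typically via a direct computation of the mixed second differences or a convexification argument near the boundary. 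An alternative path bypasses the explicit Bellman construction: since $\log w\in BMO^d$ with norm $\lesssim\sqrt{\log Q}$, the John--Nirenberg inequality provides sharp reverse-H\"older exponents for $w$ and $w^{-1}$, after which the Weighted Carleson Embedding Theorem (Theorem \ref{Carcar}) delivers a Carleson constant of the required form.
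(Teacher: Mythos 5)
Your opening reduction is correct, and the Bellman scheme you outline (size bound plus finite-difference concavity, then telescoping over the dyadic tree rooted at $J$) is a legitimate template for proving such Carleson embeddings. But the proof has a genuine gap exactly where you flag it, and for the specific candidate you propose the gap is fatal rather than merely technical. The telescoping step needs the finite-difference inequality $B(v)-\tfrac12\bigl(B(v^{+})+B(v^{-})\bigr)\geq c\,x\,(y_{+}-y_{-})^{2}$ for \emph{all} admissible splits, and by Taylor expansion along the segment $[v^{-},v^{+}]$ this forces the matrix inequality $\bigl(-d^{2}B(u)\,h,h\bigr)\geq c\,x\,h_{2}^{2}$ for all directions $h=(h_{1},h_{2})$, not just the one-dimensional condition $-B_{yy}\geq cx$ that your ODE computation addresses. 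For your ansatz $B(x,y)=y\varphi(xy)$ with $\varphi(t)=a(2-t/Q-1/t)$ one has $B(x,y)=a\bigl(2y-xy^{2}/Q-1/x\bigr)$, hence
$$-d^{2}B=2a\begin{pmatrix}1/x^{3} & y/Q\\ y/Q & x/Q\end{pmatrix},\qquad \det\bigl(-d^{2}B\bigr)=\frac{4a^{2}}{Q}\Big(\frac{1}{x^{2}}-\frac{y^{2}}{Q}\Big),$$
and the determinant is negative as soon as $xy>\sqrt{Q}$. So on the whole region $\sqrt{Q}<xy\leq Q$ the form $-d^{2}B$ has a negative eigenvalue, and a small split of an interior point of that region along the bad eigendirection (admissible, since a small ball around the point lies in $\Omega_{Q}$) makes the left-hand side of the finite-difference inequality negative while the right-hand side is nonnegative. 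Thus the candidate genuinely fails; one needs corrective terms or the known constructions of \cite{NTVV}, \cite{PV1} (note that the abstract sup-defined Bellman function has the concavity automatically, but then its size bound is essentially the statement being proved). There is also the secondary, standard but necessary, point that $\Omega_{Q}$ is not convex, so one must check that segments between admissible points stay in a slightly enlarged domain on which the concavity still holds.

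For comparison, the paper's proof is not a Bellman argument at all; it is precisely the elementary chain you only gesture at in your final sentence: Wittwer's estimate $\frac{1}{|J|}\sum_{I\in\mathcal D(J)}c_{I}^{2}\leq 2\log[w]_{A_{2}^{d}}$ from \cite{JW} (the Carleson property of $\{c_{I}^{2}\}$), the pointwise bound $(w^{-1})_{I}\leq [w]_{A_{2}^{d}}/w_{I}$, H\"older's inequality with exponents $p=1/(1-\epsilon)$ and $q=1/\epsilon$, the Carleson embedding result of \cite{JJ} to control $\sum_{I}(w_{I})^{p}c_{I}^{2}$, and finally the reverse H\"older inequality $(w^{p})_{J}^{1/p}\leq Cw_{J}$, whose exponent and constant can be taken uniform in $w$ once $[w]_{A_{2}^{d}}$ is close to $1$ --- this is where the hypothesis of the lemma enters. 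To turn your proposal into a proof you must either execute that chain in detail or exhibit a candidate that really satisfies the matrix concavity; it is worth noting that if the Bellman route is completed, your bookkeeping shows it would yield the lemma with $\epsilon=1$, which is actually sharper than the exponent $\epsilon=1/q<1$ the paper obtains.
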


\begin{proof}
Let us prove the second inequality and for the first one we follow the same steps.  In \cite{JW} it was proved that 

$$\frac{1}{|J|}\sum_{I\in\mathcal D(J)}c_{I}^{2}\leq 2\log [w]_{A_{2}^{d}},$$
which means that the sequence of numbers $\{c_{I}^{2}\}_{I\in\mathcal D}$ is a Carleson sequence and we are going to use this fact. Obviously,

$$\frac1{|J|}\sum_{I\in\mathcal D(J)}(w_{I})^{2}\ c_{I}^{2}\ (w^{-1})_{I}\leq\frac{[w]_{A_{2}^{d}}}{|J|}\sum_{I\in\mathcal D(J)}w_{I}\Big(\frac{w_{I_{-}}-w_{I_{+}}}{2w_{I}}\Big)^{2}|I|,$$
and this is equal to

$$\frac{[w]_{A_{2}^{d}}}{|J|}\sum_{I\in\mathcal D(J)}w_{I}\Big(\frac{w_{I_{-}}-w_{I_{+}}}{2w_{I}}\Big)^{2-2\epsilon}\Big(\frac{w_{I_{-}}-w_{I_{+}}}{2w_{I}}\Big)^{2\epsilon}|I|^{1-\epsilon}|I|^{\epsilon},$$
and by the use of H\"older's inequality for $p=\frac1{1-\epsilon}$ and $q=\frac1{\epsilon}$ (we will choose the number $\epsilon$ later)  we arrive at

$$\frac{[w]_{A_{2}^{d}}}{|J|}\Big(\sum_{I\in\mathcal D(J)}(w_{I})^{p}c_{I}^{2}\Big)^{\frac1{p}}\Big(\sum_{I\in\mathcal D(J)}c_{I}^{2}\Big)^{\frac1{q}}\leq\frac{[w]_{A_{2}^{d}}(2|J|\log[w]_{A_{2}^{d}})^{\frac1{q}}}{|J|}\Big(\sum_{I\in\mathcal D(J)}(w_{I})^{p}c_{I}^{2}\Big)^{\frac1{p}}.$$
Now since the sequence $\{c_{I}^{2}\}_{I\in\mathcal D}$ is Carleson we can bound the last expression (see \cite{JJ}) by

$$\frac{C}{|J|}[w]_{A_{2}^{d}}(|J|\log[w]_{A_{2}^{d}})^{\frac1{q}}|J|^{\frac1{p}}(w^{p})_{I}^{\frac1{p}}=C[w]_{A_{2}^{d}}(\log[w]_{A_{2}^{d}})^{\frac1{q}}(w^{p})_{I}^{\frac1{p}}.$$
It is time to use the Reverse H\"older inequality with the exponent $p$ (see \cite{GCRF}) and it yields the upper bound

$$C(\log[w]_{A_{2}^{d}})^{\epsilon}[w]_{A_{2}^{d}}w_{J}.$$
So far the number $\epsilon$ depends on the weight $w$ but as it was observed in \cite{P1} for weights $w$ that are close to the constant weight $1$, in the sense that the number $[w]_{A_{2}^{d}}$ is close to $1$ we can choose $\epsilon$ independently of $w$ and the proof is complete.
\end{proof}

Let us see what is happening with sums $\Sigma_{2}$ and $\Sigma_{3}$. They are similar so we will handle $\Sigma_{2}$ only. Again we put the absolute value inside and using the fact that $1-\frac{c_{I}^{2}}{|I|}\leq 1$ and applying the Cauchy-Schwartz once again we obtain the following bound from above

$$\Big(\sum_{I\in\mathcal D}(fw^{-\frac12})_{I}^{2}d_{I}^{2}w_{I}\Big)^{\frac12}\Big(\sum_{I\in\mathcal D}(gw^{\frac12},h_{I}^{w})^{2}\Big)^{\frac12}.$$
The second of these terms is bounded by the $L^{2}(dx)$ norm of $g$ as it happened in $\Sigma_{1}$. We estimate the first sum by the use of the Carleson Embedding Theorem \ref{Carcar} and Lemma \ref{USE} to arrive at the upper bound

$$C\sqrt{(\log[w]_{A_{2}^{d}})^{\epsilon}[w]_{A_{2}^{d}}}\|f\|_{2}\|g\|_{2}.$$
Finally, our last sum, $\Sigma_{4}$, can be treated in the same way. We apply Cauchy-Schwartz and get

$$ \Big(\sum_{I\in\mathcal D}(fw^{-\frac12})_{I}^{2}d_{I}^{2}w_{I}\Big)^{\frac12}\Big(\sum_{I\in\mathcal D}(gw^{\frac12})_{I}^{2}c_{I}^{2}(w^{-1})_{I}\Big)^{\frac12}$$
and again by the Carleson Embedding Theorem and Lemma \ref{USE} we see that $|\Sigma_{4}|$ is less than or equal to

$$C(\log[w]_{A_{2}^{d}})^{\epsilon}[w]_{A_{2}^{d}}\|f\|_{2}\|g\|_{2},$$
which finishes the proof of Theorem \ref{mainmain}.

\end{section}

\begin{section}{The continuous setting}
\label{sec3}
Our main goal in this section is to pass from the dyadic setting, that is the Martingale Transform, to the continuous setting that is the Hilbert and the squares of the Riesz transforms in this case. This point of view gives us a lot of information about the interplay between the dyadic operators and the continuous operators. This connection is actually one of the most interesting things in todays Harmonic Analysis. The dyadic operators have always been the prototypes of the continuous, more ``complicated", ones. We are going to prove the following Theorem regarding the Hilbert transform.

\begin{theorem}
\label{HIL}
Suppose that $w$ is an $A_{2}^{H}$ weight with $[w]_{A_{2}^{H}}=1+\delta$ where $\delta\approx 0$. Then there is a universal constant $c>0$ such that

$$\|H\|_{L^{2}(w)\rightarrow L^{2}(w)}\leq1+c\sqrt{\delta}.$$
\end{theorem}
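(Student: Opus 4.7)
The plan is to convert the dyadic estimate of Theorem \ref{mainmain} into a Bellman function $B$ on a convex domain in $\mathbb R^{6}$, and then apply $B$ to the Poisson extensions of $f, g, w, w^{-1}$ in the upper half-plane in order to transfer the bound from the Martingale transform to $H$.

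First, I would rewrite the pairing $\int Hf\cdot g\,dx$ as an integral over $\mathbb R^{2}_{+}$ via Green's identity applied to the harmonic extensions $f^{H},g^{H}$ together with the conjugate Poisson kernel. Using the Cauchy--Riemann relations between $f^{H}$ and its harmonic conjugate, the problem reduces to controlling an integrand of the form $t\bigl[(f^{H})_{x}(g^{H})_{t}-(f^{H})_{t}(g^{H})_{x}\bigr]$ pointwise by quantities built from $w^{H},(w^{-1})^{H},(|f|^{2}w^{-1})^{H},(|g|^{2}w)^{H}$.

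Next, on the convex domain
\begin{equation*}
\Omega_{Q}:=\{(x_{1},\dots,x_{6}):x_{1}^{2}\leq x_{3}x_{5},\ x_{2}^{2}\leq x_{4}x_{6},\ 1\leq x_{3}x_{4}\leq Q\}
\end{equation*}
with $Q=1+\delta$, I would introduce the Bellman function given by the supremum of $|J|^{-1}\bigl|\sum_{I\in\mathcal D(J)}\sigma_{I}(f,h_{I})(g,h_{I})\bigr|$ taken over all $(f,g,w,\sigma)$ whose averages on $J$ agree with the prescribed $(x_{1},\dots,x_{6})$. Theorem \ref{mainmain} immediately yields both the size bound $B(x)\leq (1+c\sqrt{\delta})\sqrt{x_{5}x_{6}}$ and the one-step concavity $\tfrac12\bigl(B(x^{+})+B(x^{-})\bigr)\leq B(x)-|\Delta_{1}\Delta_{2}|$ whenever $x=(x^{+}+x^{-})/2$ with both endpoints in $\Omega_{Q}$, where $\Delta_{i}:=(x_{i}^{+}-x_{i}^{-})/2$. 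A standard mollification/regularization step then upgrades this to the differential Hessian inequality $-d^{2}B\succeq 2\,|dx_{1}\,dx_{2}|$ as a symmetric bilinear form on the tangent space of $\Omega_{Q}$.

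Finally, I would compose $B$ with the vector $V(x,t):=\bigl(f^{H},g^{H},w^{H},(w^{-1})^{H},(|f|^{2}w^{-1})^{H},(|g|^{2}w)^{H}\bigr)(x,t)$. Since each component of $V$ is harmonic on $\mathbb R^{2}_{+}$, the chain rule gives $\Delta_{x,t}(B\circ V)=\sum_{i,j}\partial_{ij}B\cdot\nabla V_{i}\cdot\nabla V_{j}$, and the Hessian bound from the previous step dominates the integrand extracted in the first paragraph. Integration by parts against the weight $t$ on $\mathbb R^{2}_{+}$ then reduces the bulk integral to the boundary contribution at $t=0$, which is at most $(1+c\sqrt{\delta})\|f\|_{L^{2}(dx)}\|g\|_{L^{2}(dx)}$ by the size bound of $B$, while the term at $t=\infty$ vanishes. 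Substituting $f\mapsto fw^{-1/2},\ g\mapsto gw^{1/2}$ and taking suprema by duality deliver the claimed estimate $\|H\|_{L^{2}(w)\to L^{2}(w)}\leq 1+c\sqrt{\delta}$.

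The main obstacle is the passage from the one-step dyadic concavity to a usable differential Hessian inequality on all of $\Omega_{Q}$: the dyadic Bellman function is defined through a supremum over discrete objects, and the regularization must be performed without degrading either the size or the concavity constants. Crucially, the fact that Theorem \ref{mainmain} yields the sharp asymptotic $1+c\sqrt{\delta}$, rather than a power $[w]_{A_{2}^{d}}^{\alpha}$, is what permits $B$ to be compared to its unweighted ($\delta=0$) counterpart up to order $\sqrt{\delta}$; without this sharpness the continuous estimate for $H$ would pick up a worse exponent in $\delta$.
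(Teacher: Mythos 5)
Your outline follows essentially the same route as the paper: convert Theorem \ref{mainmain} into a Bellman function with a size bound and one-step concavity (your supremum of $|J|^{-1}\bigl|\sum_{I}\sigma_{I}(f,h_{I})(g,h_{I})\bigr|$ coincides with the paper's $\tfrac{1}{4|J|}\sum_{I}|\langle f\rangle_{I_{+}}-\langle f\rangle_{I_{-}}||\langle g\rangle_{I_{+}}-\langle g\rangle_{I_{-}}||I|$ after maximizing over $\sigma$, and your coordinates are a relabeling of the paper's), mollify to get a Hessian inequality, represent $\int H\phi\cdot\psi\,dx$ through harmonic extensions via Green's theorem, and run the chain-rule argument against the measure $t\,dt\,dx$.

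There is, however, one step that fails as written: the claim that ``the Hessian bound from the previous step dominates the integrand extracted in the first paragraph.'' The inequality $(-d^{2}B\,v,v)\geq 2|v_{1}v_{2}|$, applied via the chain rule separately in the $x$ and $t$ directions and summed, yields only
$$-\Delta_{x,t}(B\circ V)\;\geq\;2\bigl(|\partial_{x}f^{H}\,\partial_{x}g^{H}|+|\partial_{t}f^{H}\,\partial_{t}g^{H}|\bigr),$$
and this does \emph{not} pointwise dominate $|\nabla f^{H}||\nabla g^{H}|$, nor your antisymmetric Cauchy--Riemann integrand $|(f^{H})_{x}(g^{H})_{t}-(f^{H})_{t}(g^{H})_{x}|$: at a point where $\nabla f^{H}=(1,0)$ and $\nabla g^{H}=(0,1)$ the right-hand side above vanishes while the needed quantity equals $1$. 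This is precisely where the paper invokes the lemma of Dragicevic and Volberg: from $(Av,v)\geq 2|v_{1}||v_{2}|$ for all $v$ one extracts, at each point, a \emph{single} $\tau>0$ such that $(Av,v)\geq\tau v_{1}^{2}+\tau^{-1}v_{2}^{2}$ for all $v$; applying this with the same $\tau$ in both coordinate directions and summing gives $\tau|\nabla f^{H}|^{2}+\tau^{-1}|\nabla g^{H}|^{2}\geq 2|\nabla f^{H}||\nabla g^{H}|$ by the AM--GM inequality. Without this device (or an equivalent one) the bulk integral cannot be bounded below by the bilinear Littlewood--Paley expression, so your proof has a genuine gap at its central step. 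A smaller inaccuracy: your domain $\Omega_{Q}$ is not convex (the constraint $x_{3}x_{4}\leq Q$ defines a non-convex region), so the mollification must be carried out, as in the paper, on compact subsets of the interior with constants that are allowed to degrade by a factor tending to $1$.
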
 

\begin{proof}
We start by rewriting the conclusion of Theorem \ref{mainmain} which by using duality yields to

\begin{eqnarray}\label{bellmansetup}
&&\frac1{4|J|}\sum_{I\in\mathcal D(J)}|\langle f\rangle_{I_{+}}-\langle f\rangle_{I_{-}}||\langle g\rangle_{I_{+}}-\langle g\rangle_{I_{-}}||I|\\\nonumber
&\leq& (1+c\sqrt{\delta})\langle |f|^{2}w\rangle_{J}^{1/2}\langle |g|^{2}w^{-1}\rangle_{J}^{1/2},
\end{eqnarray}
for any $J\in\mathcal D$, any $f\in L^{2}(w)$ and $g\in L^{2}(w^{-1})$. The definition of the Bellman function is given as the supremum of the left hand side of (\ref{bellmansetup}):

$$B(X,Y,x,y,r,s)=\sup\Big\{\frac1{4|J|}\sum_{I\in\mathcal D(J)}|\langle f\rangle_{I_{+}}-\langle f\rangle_{I_{-}}||\langle g\rangle_{I_{+}}-\langle g\rangle_{I_{-}}||I|:$$
$$\langle f\rangle_{J}=x, \langle g\rangle_{J}=y, \langle w\rangle_{J}=r, \langle w^{-1}\rangle_{J}=s,\langle |f|^{2}w\rangle_{J}=X, \langle |g|^{2}w^{-1}\rangle_{J}=Y\Big\}.$$
We are going to denote the domain by:

$$D=\{0<(X,Y,x,y,r,s)\in\mathbb R^{6} :|x|^{2}<Xs, |y|^{2}<Yr, 1<rs<Q\},$$
where $Q=1+\delta$. This function has two very useful properties. One of them is that it satisfies the inequality:

$$0\leq B(X,Y,x,y,r,s)\leq(1+c\sqrt{\delta})X^{\frac12}Y^{\frac12},$$
for all 6-tuples of our domain, and also for all vectors $v, v^{+}, v^{-}\in D$ such that $v=\frac{v^{+}+v^{-}}{2}$ we have the inequality:

$$B(v)-\frac{B(v^{+})+B(v^{-})}{2}\geq\frac14|x^{+}-x^{-}||y^{+}-y^{-}|,$$
where $v=(X^{+}, Y^{+}, x^{+}, y^{+}, r^{+}, s^{+})$ and similarly for $v^{-}$. The proof of the second inequality is standard for the Bellman function technique and we refer the interested reader to \cite{PV1}. Assume now that a compact subset $K$ of the interior of $D$ is given. By choosing $0<\epsilon<<dist\{K,\partial D\}$, and by considering a $C^{\infty}_{c}$, Bell shaped function $S$, supported in the unit ball of $\mathbb R^{6}$ with center $0$ we can molify our function $B$ in order to get a smooth version of it, in a small neighborhood of $K$ in the following way:

$$B_{Q, K}:=B\ast S_{\epsilon},$$
where $S_{\epsilon}(x)=\frac{1}{\epsilon^{6}}S(\frac{x}{\epsilon})$. We can check that this new function inherits some of the properties of $B$ in the following sense. The Hessian matrix of $B_{Q, K}$ is non-positive definite i.e.:

\begin{equation}
\label{eq2}
\Big(-d^{2}B_{Q, K}v, v\Big)_{\mathbb R^{6}}\geq 2|v_{3}v_{4}|,
\end{equation}
and for all $(X,Y,,x,y,r,s)\in D$:

\begin{equation}
\label{eq3}
0\leq B_{Q,K}(X,Y,x,y,r,s)\leq(1+c\sqrt{\delta})(1+c_{K}\epsilon)X^{\frac12}Y^{\frac12},
\end{equation}
where $c_{K}$ is a constant that depends on the distance of our compact set $K$ to the boundary of $D$. We will use this smooth function $B_{Q,K}$ to obtain our continuity result. For this we need to consider functions $\phi,\psi\in C^{\infty}_{c}(\mathbb R)$, and observe that:

\begin{equation}
\label{eq4}
\Big|\int_{\mathbb R}H(\phi)\psi dx\Big|\leq\int_{\mathbb R}\int_{0}^{\infty}|\nabla\phi^{H}||\nabla\psi^{H}|tdtdx,
\end{equation}
where the functions appearing on the right hand side are the harmonic extensions to the upper half plane of the $\phi, \psi$ respectively. Let us justify inequality (\ref{eq4}) for the unit disk in the complex plane. Using Green's Theorem we obtain:

\begin{eqnarray*}
\int_{\mathbb S^{1}}\alpha\beta d\theta=\int_{\mathbb S^{1}}\alpha\cdot\beta\cdot\frac{\partial G}{\partial n}d\theta-\int_{\mathbb S^{1}}\frac{\partial(\alpha\beta)}{\partial n}Gd\theta&=&\int_{B}\alpha\beta\Delta Gdxdy-\int_{B}\Delta(\alpha\beta)Gdxdy\\
&=&(\alpha\beta)(0)-\int_{B}\nabla\alpha\cdot\nabla\beta\cdot Gdxdy
\end{eqnarray*}
and for $\alpha=H\phi$ and $\beta=\psi$ we arrive at inequality \ref{eq4}, since $|\nabla H\phi|=|\nabla\phi|$. Note that we use the notation $B$ for the unit disk in the complex plane, and by $G$ we denote Green's function in $B$. From now on the functions $\phi,\psi$ will be fixed. We also fix a weight $w$ in the Poisson $A_{2}$ class such that $Q_{A_{2}}(w)\leq1+\delta:=Q$, where $Q$ is the number that appears in the definition of $D$. Define the function 
$$v(x,t)=((\phi^{2}w)^{H}, (\psi^{2}w^{-1})^{H}, \phi^{H}, \psi^{H}, w^{H}, (w^{-1})^{H}),$$
that maps $\mathbb R^{2}_{+}$ into the domain $D$ (more precisely it maps compact subsets of the interior of $\mathbb R^{2}_{+}$ into compact subsets of the interior of $D$). To see this it helps to know that $A_{2}$ weights have at most logarithmic singularities. Fix any compact subset $M$ of the upper half plane and a rectangle $K$ of $\mathbb R^{2}_{+}$, say $M\subset K=[-r,r]\times [\frac1{\rho}, R]$. Then $v(K)$ is compact in $D$ and we can find a function $B=B_{Q, v(K)}$ infinitely differentiable in a small neighborhood of $v(K)$ such that inequalities $(1),(2)$ are fulfilled.  Now we define the function:

$$b(x,t)=B(v(x,t)),$$
for $(x,t)\in K$, and observe that: 

$$\Big(\frac{\partial^{2}}{\partial t^{2}}+\frac{\partial^{2}}{\partial x^{2}}\Big)b(x,t)=\Big(d^{2}B(v(x,t))\frac{\partial v(x,t)}{\partial t},\frac{\partial v(x,t)}{\partial t}\Big)_{\mathbb R^{6}}+\Big(d^{2}B(v(x,t))\frac{\partial v(x,t)}{\partial x},\frac{\partial v(x,t)}{\partial x}\Big)_{\mathbb R^{6}}$$
since all the entries of $v$ are harmonic extensions and this implies $(\nabla B(v(x,t)),(\frac{\partial^{2}}{\partial t^{2}}+\frac{\partial^{2}}{\partial x^{2}})v(x,t))_{\mathbb R^{6}}=0$. We will assume that the following calculations are correct, and after we finish the proof, we will make this more precise in remark \ref{rem7}. We define the function:

$$\beta(t)=\int_{-r}^{r}b(x,t)dx$$
for $t\in[\frac{1}{\rho},R]$. Then:

\begin{eqnarray*}
\beta(R)-\beta(\frac{1}{\rho})&=&\int_{\frac1{\rho}}^{R}\frac{d}{dt}\beta(t)dt\\
&=&\int_{-r}^{r}\int_{\frac1{\rho}}^{R}\frac{d}{dt}b(x,t)dtdx\\
&=&-\int_{-r}^{r}\int_{\frac1{\rho}}^{R}\frac{d^2}{dt^{2}}b(x,t)tdtdx+Q_{1}(R,r,\rho)\\
&=&-\int_{-r}^{r}\int_{\frac1{\rho}}^{R}\Big(\frac{d^{2}}{dt^{2}}+\frac{d^2}{dx^{2}}\Big)b(x,t)tdtdx+Q_{2}(R,r,\rho) 
\end{eqnarray*}
where $Q_{1}(R,r,\rho)=-b(r,R)R+b(-r,R)R+b(r,\frac{1}{\rho})\frac1{\rho}-b(-r,\frac1{\rho})\frac1{\rho}$ and $Q_{2}(R,r,\frac1{\rho})=Q_{1}(R,r,\frac1{\rho})+\int_{-r}^{r}\int_{\frac1{\rho}}^{R}\frac{d^{2}}{dx^{2}}b(x,t)tdtdx$. The last integral, in the series of equalities, is equal to:

\begin{equation}
\label{littleb}
\int_{-r}^{r}\int_{\frac1{\rho}}^{R}\Big[\Big(-d^{2}B(v(x,t))\frac{\partial v(x,t)}{\partial t},\frac{\partial v(x,t)}{\partial t}\Big)_{\mathbb R^{6}}+\Big(-d^{2}B(v(x,t))\frac{\partial v(x,t)}{\partial x},\frac{\partial v(x,t)}{\partial x}\Big)_{\mathbb R^{6}}\Big]tdtdx.
\end{equation}
The following is a Lemma proved in \cite{DV}:

\begin{lemma}
Let $m, n, k\in\mathbb N$. Denote by $d=m+n+k$. For arbitrary $v\in\mathbb R^d$ write $v=v_{m}\oplus v_{n} \oplus v_{k}$, where $v_{i}\in\mathbb R^{i}$ for $i=n, m, k$. Let $R=\|v_{m}\|, r=\|v_{n}\|$. Suppose that a matrix $A\in\mathbb R^{d,d}$ is such that:

$$\langle Av,v\rangle\geq2Rr,$$
for all $v\in\mathbb R^d$. Then, there is $\tau>0$, satisfying:

$$\langle Av,v\rangle\geq\tau R^{2}+\frac1{\tau}r^{2},$$
again for all $v\in\mathbb R^d$. 

\end{lemma}

Using inequality (\ref{eq2}) and the previous Lemma, we get that for every $(x,t)\in K$ there is $\tau=\tau(x,t)>0$ with the property:

$$\Big(-d^{2}B(v(x,t))\frac{\partial v(x,t)}{\partial t},\frac{\partial v(x,t)}{\partial t}\Big)_{\mathbb R^{6}}\geq\tau\Big|\frac{\partial\phi^{h}}{\partial t}\Big|^{2}+\frac{1}{\tau}\Big|\frac{\partial\psi^{h}}{\partial t}\Big|^{2}$$
(the same is true for the $x$ variable in the place of the $t$ variable). This implies that the integral in (\ref{littleb}) is at least:

$$\int_{-r}^{r}\int_{\frac1{\rho}}^{R}\Big[\tau\Big|\frac{\partial\phi^{h}}{\partial t}\Big|^{2}+\frac{1}{\tau}\Big|\frac{\partial\psi^{h}}{\partial t}\Big|^{2}+\tau\Big|\frac{\partial\phi^{h}}{\partial x}\Big|^{2}+\frac1{\tau}\Big|\frac{\partial\psi^{h}}{\partial x}\Big|^{2}\Big]tdtdx$$
and using the well known arithmetic mean--geometric mean inequality we obtain:

$$\beta(R)-\beta(\frac1{\rho})-Q_{2}(R,r,\frac{1}{\rho})\geq\int_{-r}^{r}\int_{\frac1{\rho}}^{R}\Big(\Big|\frac{\partial\phi^{h}}{\partial x}\Big|^{2}+\Big|\frac{\partial\phi^{h}}{\partial t}\Big|^{2}\Big)^{\frac12}\Big(\Big|\frac{\partial\psi^{h}}{\partial x}\Big|^{2}+\Big|\frac{\partial\psi^{h}}{\partial t}\Big|^{2}\Big)^{\frac12}tdtdx.$$ 
The last double integral is obviously bigger than or equal to the same double integral over the compact subset of $\mathbb R^{2}_{+}$, $M$. By inequality (\ref{eq3}) we have:

$$\beta(R)=\int_{-r}^{r}b(x,R)dx\leq(1+\epsilon)(1+c\sqrt{\delta})\int_{\mathbb R}\Big((\phi^{2}w)^{h}(x,R)(\psi^{2}w^{-1})^{h}(x,R)\Big)^{\frac12}dx,$$
and by H\"older's inequality, we arrive at:

\begin{eqnarray*}
\beta(R)&\leq&(1+\epsilon)(1+c\sqrt{\delta})\Big(\int_{\mathbb R}\phi(x)^{2}w(x)dx\Big)^{\frac12}\Big(\int_{\mathbb R}\psi^{2}(x)w^{-1}(x)dx\Big)^{\frac12}\\
&=&(1+\epsilon)(1+c\sqrt{\delta})\|\phi\|_{L^{2}(w)}\|\psi\|_{L^{2}(w^{-1})}.
\end{eqnarray*}
Finally, letting $\rho\to\infty$ and then $r\to\infty$ we see that: 

$$\lim_{r\to\infty}\lim_{\rho\to\infty}Q_{2}(R,r,\rho)=0,$$  
and so:

$$\beta(R)\geq\beta(R)-\beta(0)\geq\int\int_{M}\Big(\Big|\frac{\partial\phi^{h}}{\partial x}\Big|^{2}+\Big|\frac{\partial \phi^{h}}{\partial t}\Big|^{2}\Big)^{\frac12}\Big(\Big|\frac{\partial \psi^{h}}{\partial x}\Big|^{2}+\Big|\frac{\partial\psi^{h}}{\partial t}\Big|^{2}\Big)^{\frac12}tdtdx.$$
Combining everything together:

$$(1+\epsilon)(1+c\gamma(\delta))\|\phi\|_{L^{2}(w)}\|\psi\|_{L^{2}(w^{-1})}\geq\int\int_{M}\Big(\Big|\frac{\partial\phi^{h}}{\partial x}\Big|^{2}+\Big|\frac{\partial \phi^{h}}{\partial t}\Big|^{2}\Big)^{\frac12}\Big(\Big|\frac{\partial \psi^{h}}{\partial x}\Big|^{2}+\Big|\frac{\partial\psi^{h}}{\partial t}\Big|^{2}\Big)^{\frac12}tdtdx,$$
for an arbitrary compact subset $M$ of $\mathbb R^{2}_{+}$. Letting $M$ expand to the whole upper half plane, we get exactly what we need. Using (\ref{eq4}) we obtain:

$$\Big|\int_{\mathbb R}H(\phi)\psi dx\Big|\leq(1+\epsilon)(1+c\sqrt{\delta})\|\phi\|_{L^{2}(w)}\|\psi\|_{L^{2}(w^{-1})}.$$
This is true for all weights $w$ in the Poisson $A_{2}$ class with $[w]_{A_{2}^{H}}=1+\delta$ and the proof is complete.
\end{proof}

\begin{remark}
\label{rem7}
This is to explain why all of the previous manipulations of the Bellman function work, and write some comments about the proof.  For the calculations we need to look at how the Bellman function that we used was constructed. The function $B_{Q,v(K)}$ was the convolution of a positive $L^{\infty}_{loc}(\mathbb R^{6})$ function and a positive, bounded $C^{\infty}_{c}(\mathbb R^{6})$ function supported in a really small $\epsilon$-ball of $\mathbb R^{6}$ around $0$. This means that $B_{Q, v(K)}$ is in $C^{\infty}\cap L^\infty.$ In addition, note that for a fixed value of $t>0$, the functions $v(x,t)$ and $\nabla v(x,t)$ go to $0$, as $x\to\infty$, rather fast. Also if needed, we can assume that our nonconstant weight $w$, is smooth and constant outside some large ball, as in \cite{PV1}.
\end{remark}

\begin{remark}
The proof of Theorem \ref{HIL}  has an interesting feature. The Hilbert transform is not in the closure of convex combinations of Martingale transforms. It is an ``average" of dyadic shift operators in the sense described in \cite{PP}. But philosophically knowing an estimate for the Martingale transform gives us a similar one for the Hilbert transform and many other continuous operators. 
\end{remark}

For the sake of completeness we can prove a very similar result to Theorem \ref{HIL} for the squares of the Riesz transforms but instead of the Poisson $A_{2}$ characteristic of the weight we have the Heat $A_{2}$ characteristic. This was proved again in \cite{Bor} for matrix valued weights and for this reason we only mention the result.

\begin{theorem}
\label{RIE}
There is a constant $c>0$ such that for all weights $w$ in $\mathbb R^d$ of Heat $A_{2}$ characteristic sufficiently close to $1$ the estimate
\begin{equation}
\label{Riesz}
\Big\|\sum_{i=1}^{n}\sigma_{j_{i}}R_{j_{i}}^{2}\Big\|_{L^{2}(w)\rightarrow L^{2}(w)}\leq1+c\cdot\sqrt{[w]_{A_{2}^{h}}-1}
\end{equation}
is satisfied, where $\{ j_{1},j_{2}, \dots,j_{n}\}$ is an arbitrary subset of $\{1, \dots, d\}$ and $\sigma=\{\sigma_{j_{i}}\}_{i=1}^{n}$ is an arbitrary choice of signs.
\end{theorem}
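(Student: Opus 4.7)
The plan is to mirror the proof of Theorem \ref{HIL}, replacing the Poisson extension by the heat extension throughout. Setting $Q:=1+\delta=[w]_{A_2^h}$, I would keep the same six-dimensional domain $D$ and the same mollified Bellman function $B=B_{Q,v(K)}$ produced from the dyadic Theorem \ref{mainmain}, since the construction of $B$ uses only the martingale transform estimate and is insensitive to how the continuous operator is ultimately realised. For fixed test functions $\phi,\psi\in C^\infty_c(\mathbb R^d)$ and a weight $w$ with $[w]_{A_2^h}\le Q$, I would set
$$v(x,t)=\bigl((\phi^{2}w)^{h},(\psi^{2}w^{-1})^{h},\phi^{h},\psi^{h},w^{h},(w^{-1})^{h}\bigr).$$
The pointwise Cauchy--Schwarz inequality for the heat kernel (giving $|\phi^h|^2\le(\phi^2w)^h(w^{-1})^h$, and analogously for $\psi$) together with the Heat $A_2$ bound $w^h(w^{-1})^h\le Q$ confirms that $v$ carries compact subsets of $\mathbb R^{d+1}_+$ into compact subsets of the interior of $D$.

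The continuous-to-dyadic bridge that replaces (\ref{eq4}) is the Fourier identity
$$\int_{\mathbb R^d}\Bigl(\sum_{i=1}^{n}\sigma_{j_i}R_{j_i}^{2}\phi\Bigr)\psi\,dx=-2\int_{0}^{\infty}\!\int_{\mathbb R^d}\sum_{i=1}^{n}\sigma_{j_i}\,\partial_{j_i}\phi^{h}\,\partial_{j_i}\psi^{h}\,dx\,dt,$$
which I would derive from Plancherel, $\widehat{R_j^2\phi}=-\xi_j^2|\xi|^{-2}\hat\phi$, and the computation $\int_0^\infty e^{-2t|\xi|^2}\,dt=(2|\xi|^2)^{-1}$. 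Notice that no $t$-weight appears on the right, in contrast with the Hilbert case. Setting $b(x,t):=B(v(x,t))$ and using that every coordinate of $v$ is caloric, a chain-rule computation then gives
$$(\partial_{t}-\Delta_{x})b\;=\;-\sum_{j=1}^{d}\bigl(d^{2}B(v)\,\partial_{j}v,\partial_{j}v\bigr)_{\mathbb R^{6}}\;\ge\;2\sum_{j=1}^{d}|\partial_{j}\phi^{h}\,\partial_{j}\psi^{h}|\;\ge\;2\Big|\sum_{i=1}^{n}\sigma_{j_i}\,\partial_{j_i}\phi^{h}\,\partial_{j_i}\psi^{h}\Big|,$$
where the first inequality is the concavity (\ref{eq2}) applied to each $\partial_j v$, and the second is the triangle inequality.

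I would integrate this differential inequality over $[-r,r]^{d}\times[1/\rho,R]$; as $r\to\infty$ the contribution of $\int\Delta_x b\,dx$ vanishes under the smoothness/decay arrangements of Remark \ref{rem7}, and since $b\ge 0$ one may drop $\beta(1/\rho):=\int_{\mathbb R^d}b(x,1/\rho)\,dx\ge 0$ to obtain
$$\beta(R)\;\ge\;2\int_{1/\rho}^{R}\!\int_{\mathbb R^d}\Big|\sum_{i=1}^{n}\sigma_{j_i}\,\partial_{j_i}\phi^{h}\,\partial_{j_i}\psi^{h}\Big|\,dx\,dt,\qquad \beta(t):=\int_{\mathbb R^d}b(x,t)\,dx.$$
The size bound (\ref{eq3}), Cauchy--Schwarz in $x$, and the conservation $\int f^h(\cdot,R)\,dx=\int f\,dx$ give $\beta(R)\le(1+c_K\epsilon)(1+c\sqrt{\delta})\|\phi\|_{L^2(w)}\|\psi\|_{L^2(w^{-1})}$ uniformly in $R$. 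Letting first $\rho\to\infty$ and then $R\to\infty$, combining with the pairing identity, and invoking duality between $L^2(w)$ and $L^2(w^{-1})$ produces the announced estimate (\ref{Riesz}).

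The main obstacle will be the bridge identity itself: the concavity of $B$ controls a bilinear form in $\partial_j\phi^h\,\partial_j\psi^h$, so one must identify the continuous operator whose pairing has exactly that shape. In the Hilbert/Poisson setting this was Green's identity on the disk and it forced the factor $t$ in (\ref{eq4}); in the Heat $A_2$ setting the symbol identity for $R_j^2$ interlocks with the heat semigroup cleanly and no $t$-weight appears, so only one integration by parts in $t$ via $(\partial_t-\Delta_x)b$ is required in place of the second-order space--time Laplacian argument. Beyond that, the only delicate points are those already flagged in Remark \ref{rem7}: mollifying $B$, ensuring the fast $x$-decay of the entries of $v$ at each fixed $t$, and, if needed, first approximating the weight by a smooth weight that is constant outside a large ball.
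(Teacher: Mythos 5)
Your proposal is correct, and it is precisely the argument the paper has in mind: the paper does not write out a proof of Theorem \ref{RIE} at all, instead citing \cite{Bor} and recording only the key bridge identity (the final Lemma, taken from \cite{PV1}), which is exactly the Plancherel computation you carry out, with the factor $-2$ matching the factor $2$ produced by the concavity inequality (\ref{eq2}). The rest of your argument --- the same Bellman function on the same domain $D$, the caloric chain rule $(\partial_t-\Delta_x)b=-\sum_j\bigl(d^2B(v)\,\partial_j v,\partial_j v\bigr)_{\mathbb R^6}\ge 2\sum_j|\partial_j\phi^h\,\partial_j\psi^h|$ in place of the harmonic one, and the mass-conservation bound on $\beta(R)$ --- is the faithful heat-kernel analogue of the proof of Theorem \ref{HIL}, and is sound modulo the same technical points the paper itself defers to Remark \ref{rem7}.
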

The Lemma that allows us to connect the squares of the Riesz transforms and heat extensions is the following (see \cite{PV1}).

\begin{lemma}
Let $\phi, \psi$ be real functions in $C_{c}^{\infty}(\mathbb R^d)$. Then the integral $\int \frac{\partial \phi^{h}}{\partial x_{i}}\cdot\frac{\partial \psi^{h}}{\partial x_{i}} \ dxdt$ converges absolutely for all $i=1, \dots,d$ and 
\eq{\int_{\mathbb R^d} R_{i}^{2}\phi\cdot\psi\ dx=-2\int_{\mathbb R^{d+1}_{+}} \frac{\partial \phi^{h}}{\partial x_{i}}\cdot\frac{\partial \psi^{h}}{\partial x_{i}} \ dx\ dt.}
\end{lemma}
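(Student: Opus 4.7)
The plan is to verify this identity by transferring both sides to the Fourier domain and exploiting the explicit symbols of the heat extension and of the Riesz transform. With the convention $\widehat{f}(\xi)=\int f(x)e^{-ix\cdot\xi}\,dx$, the heat extension satisfies $\widehat{\phi^{h}}(\xi,t)=\hat\phi(\xi)\,e^{-t|\xi|^{2}}$, so $\widehat{\partial_{x_{i}}\phi^{h}}(\xi,t)=i\xi_{i}\,\hat\phi(\xi)\,e^{-t|\xi|^{2}}$; meanwhile the Riesz transform $R_{i}$ is the Fourier multiplier with symbol $-i\xi_{i}/|\xi|$, so $R_{i}^{2}$ has symbol $-\xi_{i}^{2}/|\xi|^{2}$. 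The whole identity then reduces to the elementary formula $\int_{0}^{\infty} e^{-2t|\xi|^{2}}\,dt=\tfrac{1}{2|\xi|^{2}}$, which supplies the $1/|\xi|^{2}$ needed to match the squared Riesz symbol together with the predicted factor of $-2$.

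Concretely, the first step is absolute convergence. Plancherel in $x$ gives $\|\partial_{x_{i}}\phi^{h}(\cdot,t)\|_{2}^{2}=c_{d}\int \xi_{i}^{2}|\hat\phi(\xi)|^{2}e^{-2t|\xi|^{2}}\,d\xi$, and integrating in $t$ and using $\xi_{i}^{2}/|\xi|^{2}\le 1$, together with the fact that $\phi\in C_{c}^{\infty}$ makes $\hat\phi$ Schwartz, shows $\partial_{x_{i}}\phi^{h}\in L^{2}(\mathbb{R}^{d+1}_{+})$; likewise for $\psi$. A single Cauchy--Schwarz on $\mathbb{R}^{d+1}_{+}$ then yields $\int\!\!\int |\partial_{x_{i}}\phi^{h}\cdot\partial_{x_{i}}\psi^{h}|\,dx\,dt<\infty$, establishing absolute convergence and legitimizing the subsequent Fubini interchanges. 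The second step is to compute both sides via Plancherel. On the left, $\int R_{i}^{2}\phi\cdot\psi\,dx=-c_{d}\int (\xi_{i}^{2}/|\xi|^{2})\,\hat\phi(\xi)\,\overline{\hat\psi(\xi)}\,d\xi$. On the right, Plancherel in $x$ for each fixed $t$, followed by the Fubini swap of $\int_{0}^{\infty}dt$ with $\int d\xi$, collapses the inner $t$-integral by the elementary identity above and produces $\tfrac{1}{2}c_{d}\int (\xi_{i}^{2}/|\xi|^{2})\,\hat\phi(\xi)\,\overline{\hat\psi(\xi)}\,d\xi$. Multiplying this by $-2$ matches the left side, proving the identity.

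The main obstacle is nothing deeper than bookkeeping of Fourier normalization constants and the careful justification of the Fubini interchange between $(x,t)$ and $\xi$; both are handled as soon as absolute convergence is in place. An alternative, more in the spirit of the Green's-identity approach used earlier in the paper for the Hilbert transform, would start from $\partial_{t}(\phi^{h}\psi^{h})=\Delta\phi^{h}\cdot\psi^{h}+\phi^{h}\cdot\Delta\psi^{h}$, integrate by parts in $t\in(0,\infty)$ and in $x\in\mathbb{R}^{d}$, and recover the summed identity $\int \phi\psi\,dx=2\sum_{j}\int_{0}^{\infty}\!\!\int \partial_{j}\phi^{h}\,\partial_{j}\psi^{h}\,dx\,dt$; but isolating the per-index $R_{i}^{2}$ statement along that route still essentially requires the Fourier computation, so the Plancherel proof above is the cleanest.
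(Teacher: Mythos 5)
Your proof is correct: the symbol computations ($e^{-t|\xi|^{2}}$ for the heat extension, $-i\xi_{i}/|\xi|$ for $R_{i}$), the $L^{2}$ bound giving absolute convergence, and the collapse of the $t$-integral via $\int_{0}^{\infty}e^{-2t|\xi|^{2}}\,dt=\tfrac{1}{2|\xi|^{2}}$ all check out, including the factor $-2$. The paper itself gives no proof of this lemma, deferring to Petermichl--Volberg \cite{PV1}, and the argument there is precisely this Plancherel computation, so your proposal coincides with the intended proof.
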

\textbf{Acknowledgments}: The author would like to thank prof. Alexander Volberg from Michigan State University since many of these ideas arose from discussions with him during his studies in MSU.

\end{section}


\vskip10pt

\noindent N. Pattakos,
{\it e-mail address}:  \texttt{nikolaos.pattakos@gmail.com}

\end{document}